\newtheorem{thm}{Theorem}
\newtheorem{lem}[thm]{Lemma}
\begin{document}

\title[The character table of a split extension \ldots]{The character table of a split extension of the Heisenberg group $H_1(q)$ by $Sp(2,q)$, $q$ odd}
\author{Marco Antonio Pellegrini}
\address{Dipartimento di Matematica e Applicazioni,\\ 
Universit\`a degli Studi di Milano-Bicocca, \\
Via R. Cozzi, 53, \\
20125 Milano (Italy)}
\email{marco.pellegrini@unimib.it}

\keywords{Character table, Heisenberg group, Symplectic group}

\subjclass{20C15}

\begin{abstract}
In this paper we determine the full character table of a certain split extension $H_1(q)\rtimes Sp(2,q)$ of the Heisenberg group $H_1$  by the odd-characteristic symplectic group $Sp(2,q)$.
\end{abstract}

\maketitle

\section{Introduction}

In his paper (\cite{Ger}) P. G\'erardin constructed the Weil representations of the odd-characteristic symplectic groups using the properties of a certain split extension $H_t(q)\rtimes Sp(2t,q)$ of the Heisenberg group $H_t(q)$ of order $q^{2t+1}$ by the symplectic group $Sp(2t,q)$. In this paper we explicitly determine the character table of this extension, in the case where $t=1$. A motivation lies in the fact that knowledge of this character table seems to be useful in the study of the restrictions to parabolic subgroups of certain unipotent characters of odd-dimensional orthogonal groups (see \cite{DPW}).

Let $V$ be the column vector space of dimension $2t$ over a finite field $F$ of order $q$, where $q$ is odd, and $V$ is provided with a non-degenerate symplectic form $j$. Given $w\in V$, we denote by $w^*$ the element of the dual space (we think at $w^*$ as a row) such that $w^* w_1= j(w,w_1)/2$. Let $H_t(q)$ be the group consisting of the matrices
$$h=h_{(w,z)}=\left(\begin{array}{c|c|c}
1 & w^* & z \\ \hline
{} & 1 & w \\ \hline
{} & {} & 1 
\end{array}
 \right) \in Mat(2t+2,F),$$
where $w\in V$ and $z\in F$. We call this group the Heisenberg group of $V$. $H_t(q)$ is obviously a central extension of $(V,+)$ by $(F,+)$. Furthermore, $H_t(q)$ is a two-step nilpotent group of order $q^{2t+1}$ whose center is isomorphic to $F$ (cf. \cite[Lemma 2.1]{Ger}).

Let $S$ be the symplectic group associated to the form $j$ and, for each $s\in S$, denote by $sw$ the image of $w$ under the natural action of $S$ on $V$. Then, the map $h_{(w,z)}\mapsto h_{(sw,z)}$  defines an automorphism of $H_t(q)$ fixing pointwise $\mathbf{Z}(H_t(q))$. Viewed as acting on matrices, this map is the conjugation by the element $\mathbf{s}=diag(1,s,1)$

Let us denote by $G$ the semidirect product $H_t(q)\rtimes Sp(2t,q)$ defined by the above action of $S$. We want to construct the character table of $G$ in the case where $t=1$. So, $G=H_1(q)\rtimes Sp(2,q)$. In this case, we can write in a unique way a generic element $g$ of $G$ as 
$$g=g_{(s,w,z)}=sh_{(w,z)}=\left(\begin{array}{c|c|c}
1 & w^* & z \\ \hline
{} & s & sw \\ \hline
{} & {} & 1 
\end{array}
 \right) ,$$
\noindent where $s\in S=Sp(2,q)$ (here we identify $s\in S$ with $\mathbf{s}\in G$ ), $w\in V$ and $z\in F$. If $w=\left(\begin{array}{c}
x\\ 
y
\end{array} \right) \in V$, then we can take as $w^*$ the row  $\frac{1}{2}(-y,x)$. Note that $|G|=q^4(q^2-1)$.

\section{The conjugacy classes}

In the sequel, we denote by $(g)$ the conjugacy class of $G$ containing the element $g$, and by $|(g)|$ the size of the conjugacy class $(g)$. The following lemma lists the conjugacy classes of $G$.

\begin{lem}
Let $F=GF(q)$, $q$ odd, and let $F^\times=\langle\nu\rangle$ be the multiplicative group of $F$. Set
$$
\mathscr{A}(z)=\left(
\begin{array}{c|cc|c}
1 & {} & {} & z\\\hline
{} & 1 & {} & {} \\
{} & {} & 1 & {} \\\hline
{} & {} & {} & 1
\end{array}
\right), \quad\quad 
\mathscr{B}=\left(
\begin{array}{c|cc|c}
1 & {} & \frac{1}{2} & {}\\\hline
{} & 1 & {} & 1 \\
{} & {} & 1 & {} \\ \hline
{} & {} & {} & 1
\end{array}
\right) ,\\
$$
$$
\mathscr{C}(z)=\left(
\begin{array}{c|cc|c}
1 & {} & {} & z\\\hline
{} & -1 & {} & {} \\
{} & {} & -1 & {} \\\hline
{} & {} & {} & 1
\end{array}
\right), \quad\quad 
\mathscr{D}_k(z)=\left(
\begin{array}{c|cc|c}
1 & {} & {} & z\\\hline
{} & \nu^k & {} & {} \\
{} & {} & \nu^{-k} & {} \\\hline
{} & {} & {} & 1
\end{array}
\right) ,\\
$$
$$
\mathscr{E}(z)=\left(
\begin{array}{c|cc|c}
1 & {} & {} & z\\\hline
{} & -1 & {} & {} \\
{} & -1 & -1 & {} \\\hline
{} & {} & {} & 1
\end{array}
\right), \quad\quad 
\mathscr{F}(z)=\left(
\begin{array}{c|cc|c}
1 & {} & {} & z\\\hline
{} & -1 & {} & {} \\
{} & -\nu & -1 & {} \\\hline
{} & {} & {} & 1
\end{array}
\right) ,
$$
$$
\mathscr{G}_m(z)=\left(
\begin{array}{c|c|c}
1 & {} & z\\\hline
{} & \mathbf{b}^m & {} \\\hline
{} & {} & 1
\end{array}
\right), \quad\quad 
\mathscr{H}(z)=\left(
\begin{array}{c|cc|c}
1 & {} & {} & z\\\hline
{} & 1 & {} & {} \\
{} & 1 & 1 & {} \\\hline
{} & {} & {} & 1
\end{array}
\right) ,
$$
$$
\mathscr{I}(z)=
\left(
\begin{array}{c|cc|c}
1 & {} & {} & z\\\hline
{} & 1 & {} & {} \\
{} & \nu & 1 & {} \\\hline
{} & {} & {} & 1
\end{array}
\right), \quad\quad
\mathscr{L}_m=\left(
\begin{array}{c|cc|c}
1 & 0 & \frac{1}{2}\nu^m & {}\\\hline
{} & 1 & {} & \nu^m \\
{} & 1 & 1 & \nu^m \\\hline
{} & {} & {} & 1
\end{array}
\right) ,$$
$$ 
\mathscr{M}_m=\left(
\begin{array}{c|cc|c}
1 & 0 & \frac{1}{2}\nu^m & {}\\\hline
{} & 1 & {} & \nu^m \\
{} & \nu & 1 & \nu^{m+1} \\\hline
{} & {} & {} & 1
\end{array}
\right) ,
$$

\noindent where $z\in F$, $1\leq k\leq \frac{q-3}{2}$, $1\leq m\leq \frac{q-1}{2}$ and $\mathbf{b}$ is an element of order $q+1$ (a `Singer cycle') of $Sp(2,q)$. These are elements of $G$, and $G$ admits exactly $q^2+5q$ conjugacy classes $(g)$ with representative $g$, as listed in the Table below.

\begin{center}
\begin{tabular}{|c|c|c|}\hline
$g$ & $|(g)|$ & Parameters\\ \hline\hline
$\mathscr{A}(z)$ & 1 & $z\in F$ \\
$\mathscr{B}$ & $q(q^2-1)$ & {}\\
$\mathscr{C}(z)$ & $q^2$ & $z\in F$ \\
$\mathscr{D}_k(z)$ & $q^3(q+1)$ & $z\in F$, $1\leq k\leq\frac{q-3}{2}$ \\
$\mathscr{E}(z)$ & $\frac{1}{2}q^2(q^2-1)$ & $z\in F$ \\
$\mathscr{F}(z)$ & $\frac{1}{2}q^2(q^2-1)$ & $z \in F$ \\
$\mathscr{G}_m(z)$ & $q^3(q-1)$ & $z\in F$, $1\leq m\leq \frac{q-1}{2}$ \\
$\mathscr{H}(z)$ & $\frac{1}{2}q(q^2-1)$ & $z \in F$ \\
$\mathscr{I}(z)$ & $\frac{1}{2}q(q^2-1)$ & $z\in F$ \\
$\mathscr{L}_m$ & $q^2(q^2-1)$ & $1\leq m\leq \frac{q-1}{2}$ \\
$\mathscr{M}_m$ & $q^2(q^2-1)$ & $1\leq m\leq \frac{q-1}{2}$ \\ \hline
\end{tabular}
\end{center}
\end{lem}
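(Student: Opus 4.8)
The plan is to exploit the semidirect product structure $G = N \rtimes S$, where $N = H_1(q)$ is the normal Heisenberg subgroup of order $q^3$ and $S = Sp(2,q) = SL(2,q)$ has order $q(q^2-1)$. Since the image of $g = g_{(s,w,z)}$ under $G \to G/N \cong S$ is $s$, conjugate elements have conjugate $S$-parts, so the $G$-classes fibre over the conjugacy classes of $S$. First I would record, by direct multiplication of the displayed matrices, the conjugation rule: conjugating $g_{(s,w,z)}$ by $g_{(s',w',z')}$ replaces the $S$-part by $s's s'^{-1}$, the vector by $s'\bigl(w+(s^{-1}-I)w'\bigr)$, and the central coordinate by $z + w'^{*}sw - w^{*}w' - (s^{-1}w')^{*}w'$; in particular the last expression is affine in $w'$ with a quadratic correction $-\tfrac12 j(s^{-1}w',w')$, and it is independent of $z'$. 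Fixing a representative $s$ of each $S$-class, the $G$-classes lying over $[s]_S$ are then exactly the orbits of $N \rtimes C_S(s)$ acting by conjugation on $\{g_{(s,w,z)} : w\in V,\ z\in F\}$.

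Next I would run through the conjugacy classes of $SL(2,q)$, $q$ odd, which are standard: $I$, $-I$, the two unipotent classes of $u_+=\left(\begin{smallmatrix}1&0\\1&1\end{smallmatrix}\right)$ and $u_\times=\left(\begin{smallmatrix}1&0\\\nu&1\end{smallmatrix}\right)$, their negatives $-u_+,-u_\times$, the $\tfrac{q-3}{2}$ split classes $\operatorname{diag}(\nu^k,\nu^{-k})$, and the $\tfrac{q-1}{2}$ nonsplit classes $\mathbf{b}^m$, together with their centralizer orders in $S$ (namely $q(q^2-1)$ for $\pm I$, $2q$ for $\pm u_\pm$, $q-1$ for the split tori, $q+1$ for the nonsplit torus). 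The governing dichotomy is whether $s-I$ is invertible. When it is (this covers $-I$, $\operatorname{diag}(\nu^k,\nu^{-k})$, $\mathbf{b}^m$, and also $-u_\pm$ since $\det(-u_\pm - I)=4\neq 0$), the map $w'\mapsto(s^{-1}-I)w'$ is onto, so every element over $[s]_S$ is $N$-conjugate to some $g_{(s,0,z)}$; one checks that the only such representative in a given orbit is $(0,z)$ itself, whence $z$ is a complete invariant and $C_G(g_{(s,0,z)}) = C_S(s)\times\mathbf{Z}(N)$. This produces $q$ classes for each such $s$, namely $\mathscr{C}(z),\mathscr{D}_k(z),\mathscr{G}_m(z),\mathscr{E}(z),\mathscr{F}(z)$, with sizes read off as $|G|/\bigl(q\,|C_S(s)|\bigr)$.

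The remaining and most delicate cases are $s=I$ and the two unipotent classes, where $s-I$ is singular. For $s=I$ the vector $w$ is a genuine invariant: $w=0$ gives the $q$ central classes $\mathscr{A}(z)$, while for $w\neq 0$ the transitivity of $S$ on $V\setminus\{0\}$ together with the surjectivity of $z\mapsto z - j(w,u)$ collapses everything to the single class $\mathscr{B}$. For $s=u_+$ (and identically $u_\times$) one has $\ker(s-I)=\operatorname{Im}(s-I)=L$, a line, so $w \bmod L$ is preserved by $N$-conjugation; when $w\in L$ one clears $w$ and again finds $z$ a complete invariant, giving $\mathscr{H}(z)$ (resp.\ $\mathscr{I}(z)$), whereas when $w\notin L$ the linear part of the $z$-rule becomes surjective, so $z$ can be cleared, leaving the reduced vector $w=(w_1,0)$ with $w_1\neq 0$ as the essential datum. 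Here the main obstacle is that $C_S(u_+)=\{\pm I\}\cdot\{\text{unipotent}\}$ acts by $w_1\mapsto \pm w_1$, so the invariant is $w_1$ only up to sign; since $\{\nu^m : 1\le m\le \tfrac{q-1}{2}\}$ is a transversal of $F^\times/\{\pm 1\}$, this yields exactly $\tfrac{q-1}{2}$ classes $\mathscr{L}_m$ (resp.\ $\mathscr{M}_m$), and a direct computation pins down $|C_G|=q^2$, hence size $q^2(q^2-1)$. Distinguishing $u_+$ from $u_\times$ (and $-u_+$ from $-u_\times$) rests on the usual splitting of the regular unipotent class of $SL(2,q)$ into square and nonsquare types, which is precisely where $\nu$ enters.

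Finally I would assemble the tally. Summing the contributions $q+1$ (from $I$), $q$ (each from $-I$, $-u_+$, $-u_\times$), $\tfrac{q(q-3)}{2}$ (from the split tori), $\tfrac{q(q-1)}{2}$ (from the nonsplit torus), and $q+\tfrac{q-1}{2}$ (each from $u_+$ and $u_\times$), over the $q+4$ classes of $S$, gives $6q+1+\bigl(q^2-q-1\bigr)=q^2+5q$. As an independent consistency check I would verify that the listed class sizes, weighted by the number of classes in each family, sum to $q^4(q^2-1)=|G|$, which confirms that the enumeration is exhaustive and that no class has been omitted or double-counted.
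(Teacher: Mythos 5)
Your proposal is correct, and its computational core coincides with the paper's: the same conjugation formula in the semidirect product, the same reduction to the conjugacy classes of $Sp(2,q)\cong SL(2,q)$ with their standard centralizer orders, and the same case dichotomy according to whether $s-I$ is invertible (the paper phrases this as $s$ having or not having eigenvalue $1$). Where you genuinely diverge is in how exhaustiveness is established. The paper exhibits the listed representatives, computes their centralizers from the explicit centralizer conditions, and then closes the argument purely by counting: the $q^2+5q$ classes found have sizes summing to $|G|=q^4(q^2-1)$, so nothing can be missing. You instead make the fibration over $S$-classes the organizing principle --- the $G$-classes lying over $[s]_S$ are exactly the orbits of $N\rtimes \mathbf{C}_S(s)$ on the fibre $\{g_{(s,w,z)}\}$ --- and you classify \emph{all} orbits in each fibre: clearing $w$ when $s-I$ is invertible, and in the unipotent case analyzing the invariant $w \bmod L$ together with the sign action of $\mathbf{C}_S(s)$ on $V/L$, so that the $\mathscr{L}_m$ and $\mathscr{M}_m$ classes appear as a transversal of $F^\times/\{\pm 1\}$. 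On your route completeness is structural and the final count is only a redundant consistency check, whereas in the paper the count \emph{is} the completeness proof; conversely, the paper's exhibit-and-count argument is shorter precisely because the final sum absolves it from ever showing that the listed representatives exhaust each fibre (it never needs to prove, say, that every element with unipotent $S$-part and $w\notin L$ is conjugate to some $\mathscr{L}_m$). Both arguments are sound; yours trades brevity for a classification that explains where each family of classes comes from.
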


\begin{proof}
Let $g_1=g_{(s_1,w_1,z_1)}$ and $g_2=g_{(s_2,w_2,z_2)}$ be two generic elements of $G$. Then
$g_1 g_2 g_1^{-1} =g_{(s_1 s_2 s_1^{-1}, s_1(w_2-w_1+s_2^{-1} w_1),z_2-(w_2+s_2^{-1}w_1+s_2 w_2)^*w_1)}$. It easily follows  that if $g_1$ is conjugate to $g_2$ in $G$, then $s_1$ is conjugate to $s_2$ in $S$. Moreover,  if $z_1\neq z_2$, then the elements $g_{(s_1,0,z_1)}$ and $g_{(s_2,0,z_2)}$ cannot be conjugate in $G$.
Observe that $g_1 \in \mathbf{C}_G(g_2)$ if and only if 
$$\left\{
\begin{array}{l}
s_1\in \mathbf{C}_S(s_2) \\
w_2+s_2^{-1}w_1 = w_1+s_1^{-1}w_2 \\
w_1^*(s_2 w_2) = w_2^*(s_1w_1) \\
\end{array}\right. .$$\\

Let us consider the elements $\mathscr{A}(z)=g_{(1,0,z)}$, $z\in F$. It is straightforward to see that $\mathbf{Z}(G)=\mathbf{Z}(H_1(q))= \{\mathscr{A}(z): z \in F \}\cong (F,+)$. Therefore, each of these $q$ elements of $G$ forms a central class of order 1. In particular, $\mathscr{A}(0)$ is the identity of $G$. \\

Now, let us consider the element $g_{(1,w,0)}=\mathscr{B} \in H_1(q)\setminus \mathbf{Z}(H_1(q))$. Then, $|\mathbf{C}_{G}(\mathscr{B})|=q^3$, i.e. $|(\mathscr{B})|=q(q^2-1)$. Since
$$g_{(s_1,w_1,z_1)} \mathscr{B} g_{(s_1,w_1,z_1)}^{-1}=g_{(1,s_1w,-2w^*w_1)},$$
it turns out that the elements of $H_1(q)\setminus \mathbf{Z}(H_1(q))$ form a single conjugacy class $(\mathscr{B})$ of $G$.\\

Set $g=g_{(s,0,z)}\in \{\mathscr{C}(z), \mathscr{D}_k(z), \mathscr{E}(z), \mathscr{F}(z), \mathscr{G}_m(z)\}$. Recall (e.g., see \cite[\S 38]{Dor}) that $S$ admits elements $\mathbf{b}$ of order $q+1$, the so-called `Singer cycles'. As observed before, for different values of $z$ and $s$ the elements $g_{(s,0,z)}$ belong to $q^2+q$ distinct conjugacy classes of $G$. Now, an element $g_{(s_1,w_1,z_1)}$ belongs to $\mathbf{C}_G(g)$ if and only if 
\begin{equation}\label{eq 1}
\left\{\begin{array}{l}
s_1 \in \mathbf{C}_S(s)\\
s w_1=w_1
\end{array}
\right. .
\end{equation}

Since $s$ does not have eigenvalue $1$, the condition $s w_1=w_1$ implies $w_1=0$. It follows that $|\mathbf{C}_G(g)|= q|\mathbf{C}_S(s)|$, and using the information about the centralizers of elements of $S$ contained in \cite[\S 38]{Dor}, we obtain the results listed in the statement of the lemma.\\

Next, let us consider elements $g=g_{(s,0,z)}\in \{\mathscr{H}(z), \mathscr{I}(z)\}$. We argue as above, but note that this time $s$ does admit the eigenvalue $1$. This implies that in (\ref{eq 1}) $w_1=\left(
\begin{array}{c}
0 \\
y \\
\end{array} \right),$ where $y \in F$. So $|\mathbf{C}_G(g)|=q^2|\mathbf{C}_S(s)|=2q^3$, i.e. $|(g)|=\frac{q(q^2-1)}{2}$.\\

Finally, let us consider elements $g=g_{(s,w,0)}\in \{\mathscr{L}_m, \mathscr{M}_m\}$, where 
$$s=\left(
\begin{array}{cc}
1 & 0 \\
\epsilon & 1\\
\end{array} \right), \quad
w=\left(
\begin{array}{cc}
\nu^m\\
0 \\
\end{array} \right) ,$$ 
and $\epsilon \in \{1, \nu\}$. Easy calculations show that if $1\leq m\leq \frac{q-1}{2}$ the elements $g$ belong to distinct conjugacy classes of $G$. An element $g_{(s_1,w_1,z_1)}$ belongs to $\mathbf{C}_G(g)$ if and only if 
$$\left\{\begin{array}{l}
s_1 \in \mathbf{C}_S(s)= \Big\{\left( \begin{array}{cc} a & 0\\ c & a \end{array}\right) : a=\pm 1,  c\in F \Big\} \\
w+s^{-1}w_1=w_1+s_1^{-1}w\\
w_1^*(sw)=w^*(s_1w_1)
\end{array}
\right. .$$
Since the condition $w+s^{-1}w_1= w_1+s_1^{-1}w$ implies $a=1$, it follows that $g_{(s_1,w_1,z_1)}$ can be chosen in $q^2$ different ways. Thus, $|\mathbf{C}_G(g)|=q^2$, i.e. $|(\mathscr{L}_m)|=|(\mathscr{M}_m)| =q^2(q^2-1)$.\\

So far, we have found $q^2+5q$ distinct conjugacy classes, adding up to $|G|$ elements. Thus, we are done. 
\end{proof}

\section{The character table}

First of all, we observe that the character table of $SL(2,q)\cong Sp(2,q)\cong G/H_1(q)$ is well-known, e.g., see \cite[\S 38]{Dor}, to which we refer for notation and all the information needed in the sequel.

Next, note that, as $\mathbf{Z}(G)=\{\mathscr{A}(z): z \in F\}$, for any irreducible character $\chi$ of $G$
$$\chi(\mathscr{C}(z))=\frac{\chi(\mathscr{A}(z))}{\chi(1)}\chi(\mathscr{C}(0))$$
for all $z \in F$. The same holds for the classes $(\mathscr{D}_k(z))$, $(\mathscr{E}(z))$, $(\mathscr{F}(z))$, $(\mathscr{G}_m(z))$, $(\mathscr{H}(z))$ and $(\mathscr{I}(z))$. So, in the character table we only report the values of a character on $\mathscr{C}(0)$, $\mathscr{D}_k(0)$ and so on.\\

Since $G/H_1(q)\cong SL(2,q)$, knowledge of the character table of $SL(2,q)$ gives us by inflation $q+4$ characters: namely $1_G$, $\eta_1$, $\eta_2$, $\xi_1$, $\xi_2$, $\theta_j$ ($1\leq j \leq \frac{q-1}{2}$), $\psi$ and $\chi_i$ ($1\leq i\leq \frac{q-3}{2}$).\\

Next, we construct $q-1$ distinct irreducible characters of $G$ having degree $q$. Denote by $\lambda$ a fixed non-trivial character of $\mathbf{Z}(G)\cong (F,+)$. Clearly, each of the $q$ linear characters of $\mathbf{Z}(G)$ can be parametrised as $\lambda_u$ ($u\in F$), where $\lambda_u(z)=\lambda(uz)$ for all $z \in F$. In particular, $\lambda_0=1_{\mathbf{Z}(G)}$. We know by \cite[Lemma 1.2]{Ger} that $H_1(q)$ has exactly $q-1$ non-linear irreducible characters $\tilde\lambda_u$, defined as
$$\tilde\lambda_u(h)=\left\{ \begin{array}{cl}
q\lambda_u(h) & \textrm{ if } h \in \mathbf{Z}(H_1(q))  \\
0 & \textrm{ if } h \not\in \mathbf{Z}(H_1(q))
\end{array}\right. \quad (u\in F^\times).$$

Furthermore, by \cite[Theorem 2.4]{Ger} the characters $\tilde\lambda_u$ can be extended to $G$. We denote such extensions by $\omega_u$ ($u \in F^\times$). The values taken by the characters $\omega_u$ on the elements of $S$ can be found in \cite[Proposition 2]{Sz}. Namely:

\begin{small}
\begin{center}
\begin{tabular}{|c||c|c|c|c|c|c|c|c|c|c|} \hline
$g$ & $1$ & $\mathscr{A}(z)$ & $\mathscr{B}$ & $\mathscr{C}(0)$ & $\mathscr{D}_k(0)$ & $\mathscr{E}(0)$ & $\mathscr{F}(0)$ & $\mathscr{G}_m(0)$ & $\mathscr{H}(0)$ & $\mathscr{I}(0)$ \\ \hline 
$\omega_u(g)$ & $q$ & $q\lambda_u(z)$ & $0$ & $\delta$ & $(-1)^k$ & $\delta$ & $\delta$ & $(-1)^{m+1}$ & $Q(\lambda_u)$  & $-Q(\lambda_u)$\\ \hline
\end{tabular}
\end{center}
\end{small}

\noindent where

$$Q(\lambda)=\sum_{t\in F}\lambda(-t^2/2), \quad \quad Q(\lambda_u)=\sum_{t\in F}\lambda_u(-t^2/2)=\Big(\dfrac{u}{F}\Big)Q(\lambda)$$
and
$$\Big(\dfrac{u}{F}\Big)=\left\{\begin{array}{cl} 
+1 & \textrm{ if } u \textrm{ is a square in } F  \\
-1 &  \textrm{ if } u \textrm{ is not a square in } F
\end{array}\right. $$
(it turns out that $|Q(\lambda)|^2=q$).

We are left to compute the values of the $\omega_u$'s on the classes $(\mathscr{L}_m)$ and $(\mathscr{M}_m)$. To this purpose, we compute 
$$1=(\omega_u,\omega_u)_G=\frac{q^4(q^2-1)+q^2(q^2-1)\sum_{m=1}^{\frac{q-1}{2}} (|\omega_u(\mathscr{L}_m)|^2+|\omega_u(\mathscr{M}_m)|^2)}{q^4(q^2-1)}.$$
This implies that $\omega_u(\mathscr{L}_m)=\omega_u(\mathscr{M}_m)=0$, for all $1\leq m\leq \frac{q-1}{2}$.\\

It is easy to verify that the characters $\omega_u\eta_1$, $\omega_u\eta_2$, $\omega_u\xi_1$, $\omega_u\xi_2$, $\omega_u\theta_j$, $\omega_u\psi$ and $\omega_u\chi_i$ ($u \in F^\times$) are pairwise distinct irreducible characters of $G$.\\

At this stage, $q$ irreducible characters of $G$ are still missing. We construct them as follows.

Let us consider the Sylow $p$-subgroup $K$ of $G$ consisting of the matrices of shape

$$k_{(a,x,y,z)}=\left(
\begin{array}{c|cc|c}
1 & -y/2 & x/2 & z\\\hline
{} & 1 & a & x+ay \\
{} & {} & 1 & y \\\hline
{} & {} & {} & 1
\end{array}
\right) ,$$
where $a,x,y \in F$. Define the linear characters $\mu_{u_1,u_2}$ ($u_1,u_2\in F$) of $K$ setting
$\mu_{u_1,u_2}(k_{(a,x,y,z)})= \lambda_{u_1}(a)\lambda_{u_2}(y)=\lambda(u_1 a+u_2 y)$, where, as above, $\lambda_u$ denotes the non-trivial linear character of $\mathbf{Z}(G)$ associated to $u\in F^\times$ (in particular, $\mu_{0,0}=1_K$). 

We consider the induced characters $\mu_{u_1,u_2}^G$.

First of all, note that $(\mathscr{C}(z))\cap K=\emptyset$ and that the same holds also for $(\mathscr{D}_k(z))$, $(\mathscr{E}(z))$, $(\mathscr{F}(z))$ and $(\mathscr{G}_m(z))$. So, the value of $\mu_{u_1,u_2}^G$ on these classes is $0$, whereas the value on $\mathscr{A}(z)$ is
$$\mu_{u_1,u_2}^G(\mathscr{A}(z))= \frac{q^4(q^2-1)}{q^4}=q^2-1.$$

\noindent To compute $\mu_{u_1,u_2}^G(\mathscr{B})$, we observe that if $s=\left(\begin{array}{cc} s_{11} & s_{12} \\ s_{21} & s_{22} \end{array}\right)\in S$ and $g=g_{(s,w,z)}$, then $\mu_{u_1,u_2}(g\mathscr{B}g^{-1})=\lambda_{u_1}(0)\lambda_{u_2}(s_{21})=\lambda_{u_2}(s_{21})$. 
So, if $s_{21}=0$ the matrix $s$ can be chosen in $q(q-1)$ ways, whereas if we fix $s_{21}\neq 0$, $s$ can be chosen in $q^2$ different ways. For $u_2\neq 0$ we obtain 
\begin{eqnarray*}
\mu_{u_1,u_2}^G(\mathscr{B}) & = &\frac{q^3[q(q-1)+q^2\sum_{s_{21}\neq 0}\lambda_{u_2}(s_{21}) ]}{q^4} \\
{} & = &  \frac{q^3[q(q-1)-q^2]}{q^4}= -1\\
\end{eqnarray*}
Next, we look at the classes $(\mathscr{H}(z))$. The matrices $s$ such that $g\mathscr{H}(z)g^{-1} \in K$ are of shape $\left(\begin{array}{cc} s_{11} & s_{12} \\ -1/s_{12} & 0 \end{array}\right)$. It follows that
$$\mu_{u_1,u_2}(g\mathscr{H}(z)g^{-1})=\lambda_{u_1}(-s_{12}^2)\lambda_{u_2}(0)$$
and therefore
\begin{eqnarray*}
\mu_{u_1,u_2}^G(\mathscr{H}(z)) & = &\frac{q^3q\sum_{t\neq 0}\lambda_{u_1}(-t^2)}{q^4}\\
{} & = & -1+Q(\lambda_{2u_1})= -1+ \Big(\dfrac{2}{F}\Big) Q(\lambda_u)\\
\end{eqnarray*}
In a similar way, one also obtains that 
$$\mu_{u_1,u_2}^G(\mathscr{I}(z))=\sum_{t\neq 0}\lambda_{u_1}(-\nu t^2).$$

In particular, for $u_1=0$ we get $\mu_{0,u_2}^G(\mathscr{H}(z))=\mu_{0,u_2}^G(\mathscr{I}(z))=q-1$, whereas for $u_1\neq 0$, we get $\mu_{u_1,u_2}^G(\mathscr{I}(z))=-1-Q(\lambda_{2u_1})$.

The value of $\mu_{u_1,u_2}^G$ on $\mathscr{L}_m$ is obtained in the same way as above: $s$ has the same shape as in the case $\mathscr{H}(z)$, but $\mu_{u_1,u_2}(g\mathscr{L}_m g^{-1})= \lambda_{u_1}(-s_{12}^2)\lambda_{u_2}(\frac{-\nu^m}{s_{12}})$. Thus,

\begin{eqnarray*}
\mu_{u_1,u_2}^G(\mathscr{L}_m) & = &  \frac{q^3q\sum_{t\neq 0}\lambda_{u_1}(-t^2)\lambda_{u_2}(-\nu^m/t)}{q^4} \\
{} & = & \sum_{t\neq 0}\lambda\Big(-\frac{ u_1 t^3+u_2 \nu^m }{t}\Big) .
\end{eqnarray*}

\noindent Similarly, in the case of $\mathscr{M}_m$, we obtain
\begin{eqnarray*}
\mu_{u_1,u_2}^G(\mathscr{M}_m)  & = &  \frac{q^3q\sum_{t\neq 0}\lambda_{u_1}(-\nu t^2)\lambda_{u_2}(-\nu^m/t)}{q^4}\\
{} & = & \sum_{t\neq 0}\lambda\Big(-\frac{ u_1 \nu t^3+u_2 \nu^m }{t}\Big) .
\end{eqnarray*}
In particular, for $u_1=0$ we get $\mu_{0,u_2}^G(\mathscr{L}_m)=\mu_{0,u_2}^G(\mathscr{M}_m)=-1$.

Set $\kappa_0=\mu_{0,1}^G$. Computing $(\kappa_0,\kappa_0)_G$, one sees that $\kappa_0$ is irreducible. Furthermore, for all $u_1,u_2\in F^\times$, $\kappa_0$ is different from any of the $\mu_{u_1,u_2}^G$'s  because 
$$\kappa_0(\mathscr{H}(0))+\kappa_0(\mathscr{I}(0))=2q-2\neq \mu_{u_1,u_2}^G(\mathscr{H}(0))+\mu_{u_1,u_2}^G(\mathscr{I}(0))=-2.$$

Next, we show that we can always pick $q-1$ pairwise distinct irreducible characters among the $\mu_{u_1,u_2}^G$'s. For instance, we can take as $(u_1,u_2)$ the pairs $(1,\nu^n)$ and $(\nu,\nu^n)$, where $1\leq n\leq \frac{q-1}{2}$. Set $\kappa_{1,n}=\mu_{1,\nu^n}^G$, $\kappa_{\nu,n}=\mu_{\nu,\nu^n}^G$. We start showing that these characters are irreducible.

Use of Mackey's formula implies that
$$(\kappa_{1,n},\kappa_{1,n})_G=\sum_{r\in \mathcal{R}} (\mu_{1,\nu^n},{}^r\mu_{1,\nu^n })_{K\cap {{}^rK}},$$
where $\mathcal{R}$ is a complete set of representatives for the double cosets of $K$ in $G$. As $\mathcal{R}$ we can choose the set $\{s(\alpha), \overline{s}(\beta)\mid \alpha,\beta\in F^\times\}$, where 
$$s=s(\alpha)=\left(
\begin{array}{cc}
\alpha & 0 \\
0 & 1/\alpha
\end{array}
\right), \quad \overline{s}=\overline{s}(\beta)=\left(
\begin{array}{cc}
0 & -1/\beta \\
\beta & 0
\end{array}\right) \in S.$$
Note that $|KsK|=q^4$ and $|K\overline{s}K|=q^5$. Since the $\mu_{1,\nu^n}$'s are linear characters, it suffices to show that for $r\neq s(1)$, the restrictions of $\mu_{1,\nu^n}$ and ${}^r\mu_{1,\nu^n}$ to $K\cap {}^rK$ are distinct. 

First, we look at the double cosets $K\overline{s}(\beta)K$. For all ${}^{\overline{s}} k \in K \cap {}^{\overline{s}}K$, we have $\mu_{1,\nu^n}({}^{\overline{s}}k) =\lambda_{\nu^n}(\beta x)$ and ${}^{\overline{s}}\mu_{1,\nu^n}({}^{\overline{s}}k) =\mu_{1,\nu^n}(k)= \lambda_{\nu^n}(y)$. It follows that, if $\mu_{1,\nu^n}={}^{\overline{s}}\mu_{1,\nu^n}$, then $\lambda_{\nu^n}(\beta x)=\lambda_{\nu^n}(y)$, for all $x,y\in F$. In particular, for $x=0$, we have $\lambda_{\nu^n}(y)=1$ for all $y\in F$, i.e. $Ker(\lambda_{\nu^n})= \mathbf{Z}(H_1(q))$, forcing $\nu^n=0$, a contradiction.

Next, we look at the double cosets $Ks(\alpha)K$. For all ${}^s k \in K \cap {}^sK$, we have $\mu_{1,\nu^n}({}^sk)=\lambda_1(a\alpha^2)\lambda_{\nu^n}(\frac{y}{\alpha})$ and ${}^s\mu_{1,\nu^n}({}^s k)=\mu_{1,\nu^n}(k)= \lambda_1(a)\lambda_{\nu^n}(y)$. 
It follows that, if $\mu_{1,\nu^n}={}^s\mu_{1,\nu^n}$, then $\lambda_1(a\alpha^2)\lambda_{\nu^n}(\frac{y}{\alpha})=\lambda_1(a)\lambda_{\nu^n}(y)$, for all $a,y\in F$. In particular, for $y=0$, we get $\lambda_{\alpha^2}=\lambda_1$, and so $\alpha=1$. Clearly, for $\alpha=1$, the two restrictions are the same character. This proves that the characters $\kappa_{1,n}$ are irreducible. 

In the same way, we can prove that the characters $\kappa_{\nu,n}$ are also irreducible.  To conclude, we are left to show that the characters $\kappa_{1,n}$ and $\kappa_{\nu,n}$ are pairwise distinct. This can be obtained proving that $(\kappa_{d,n},\kappa_{d_1,n_1})_G=0$, for $d,d_1\in \{1,\nu\}$, $1\leq n\leq \frac{q-1}{2}$ and $(d,n)\neq (d_1,n_1)$.
As above, we exploit Mackey's formula. The double cosets $K\overline{s}(\beta)K$ are dealt with in the same way as before.
In the case of the double cosets $Ks(\alpha)K$, for $d=d_1$ we can argue as before. In the case $(d,d_1)=(1,\nu)$, if the restrictions of $\mu_{1,\nu^n}$ and $\mu_{\nu,\nu^{n_1}}$ are the same, then
$$\lambda_1(a\alpha^2)\lambda_{\nu^n}(\frac{y}{\alpha})=\lambda_\nu(a)\lambda_{\nu^{n_1}}(y)$$
for all $a,y\in F$. In particular, for $y=0$, we get $\lambda_{\alpha^2}=\lambda_\nu$, a contradiction, since $\nu$ is not a square in $F$.\\

In conclusion, the desired character table of $G$ can be described as follows:
\newpage

\begin{small}
\begin{center}
\begin{tabular}{|c||c|c|c|c|c|} \hline
{} & $1$ & $\mathscr{A}(z)$ & $\mathscr{B}$ & $\mathscr{C}(0)$ & $\mathscr{D}_k(0)$  \\ \hline \hline  
$1_G$ & 1 & 1 & 1 & 1  & 1   \\
$\eta_1$ & $\frac{q-1}{2}$ & $\frac{q-1}{2}$  &  $\frac{q-1}{2}$ & $\frac{-\delta(q-1)}{2}$ & 0  \\
$\eta_2$ & $\frac{q-1}{2}$ & $\frac{q-1}{2}$  &  $\frac{q-1}{2}$ & $\frac{-\delta(q-1)}{2}$ & 0  \\
$\xi_1$ & $\frac{q+1}{2}$ & $\frac{q+1}{2}$  &  $\frac{q+1}{2}$ & $\frac{\delta(q+1)}{2}$ & $(-1)^k$  \\
$\xi_2$ & $\frac{q+1}{2}$ & $\frac{q+1}{2}$  &  $\frac{q+1}{2}$ & $\frac{\delta(q+1)}{2}$ & $(-1)^k$ \\
$\theta_j$ & $q-1$ & $q-1$ & $q-1$ & $(-1)^j(q-1)$ & $0$  \\ 
$\psi$ & $q$ & $q$ & $q$ & $q$ & $1$ \\ 
$\chi_i$ & $q+1$ & $q+1$ & $q+1$ & $(-1)^i(q+1)$ & $\rho^{ik}+\rho^{-ik}$ \\ \hline
$\kappa_0$ & $q^2-1$ & $q^2-1$ & $-1$ & $0$ & $0$ \\
$\kappa_{1,n}$ & $q^2-1$ & $q^2-1$ & $-1$ & $0$ & $0$   \\ 
$\kappa_{\nu,n}$ & $q^2-1$ & $q^2-1$ & $-1$ & $0$ & $0$   \\ \hline
$\omega_u$ & $q$ & $q\lambda_u(z)$ & $0$ & $\delta$ & $(-1)^k$\\
$\omega_u\eta_1$ & $\frac{q(q-1)}{2}$ & $\frac{q(q-1)\lambda_u(z)}{2}$ & $0$ & $-\frac{(q-1)}{2}$ & $0$  \\
$\omega_u\eta_2$ & $\frac{q(q-1)}{2}$ & $\frac{q(q-1)\lambda_u(z)}{2}$ & $0$ & $-\frac{(q-1)}{2}$ & $0$  \\
$\omega_u\xi_1$ & $\frac{q(q+1)}{2}$ & $\frac{q(q+1)\lambda_u(z)}{2}$ & $0$ & $\frac{(q+1)}{2}$ & $1$ \\
$\omega_u\xi_2$ & $\frac{q(q+1)}{2}$ & $\frac{q(q+1)\lambda_u(z)}{2}$ & $0$ & $\frac{(q+1)}{2}$ & $1$ \\
$\omega_u\theta_j$ & $q(q-1)$ & $q(q-1)\lambda_u(z)$ & $0$ & $(-1)^j\delta (q-1)$ & $0$ \\
$\omega_u\psi$ & $q^2$ & $q^2 \lambda_u(z)$ & $0$ & $\delta q$ & $(-1)^k$  \\
$\omega_u\chi_i$ & $q(q+1)$ & $q(q+1)\lambda_u(z)$ & $0$ & $(-1)^i\delta (q+1)$ & $(-1)^k(\rho^{ik}+\rho^{-ik})$ \\ \hline
\end{tabular}

\begin{tabular}{|c||c|c|c|c|} \hline
{} &  $\mathscr{E}(0)$ & $\mathscr{F}(0)$ & $\mathscr{G}_m(0)$ & $\mathscr{H}(0)$\\ \hline\hline 
$1_G$ & 1 & 1 & 1 & 1  \\
$\eta_1$ & $\frac{-\delta(-1+\sqrt{\delta q})}{2}$ & $\frac{-\delta(-1-\sqrt{\delta q})}{2}$ & $(-1)^{m+1}$ & $\frac{(-1+\sqrt{\delta q})}{2}$\\
$\eta_2$  & $\frac{-\delta(-1-\sqrt{\delta q})}{2}$ & $\frac{-\delta(-1+\sqrt{\delta q})}{2}$ & $(-1)^{m+1}$ & $\frac{(-1-\sqrt{\delta q})}{2}$  \\
$\xi_1$  & $\frac{\delta(1+\sqrt{\delta q})}{2}$ & $\frac{\delta(1-\sqrt{\delta q})}{2}$ & $0$ & $\frac{(1+\sqrt{\delta q})}{2}$  \\
$\xi_2$ & $\frac{\delta(1-\sqrt{\delta q})}{2}$  & $\frac{\delta(1+\sqrt{\delta q})}{2}$ & $0$ & $\frac{(1-\sqrt{\delta q})}{2}$ \\
$\theta_j$ & $(-1)^{j+1}$ & $(-1)^{j+1}$ & $-(\sigma^{jm}+\sigma^{-jm})$ & $-1$  \\
$\psi$ & $0$  & $0$ & $-1$ & $0$  \\
$\chi_i$ & $(-1)^i$ &  $(-1)^i$ & $0$ & $1$ \\  \hline
$\kappa_0$  & $0$ & $0$ & $0$ & $q-1$ \\ 
$\kappa_{1,n}$  & $0$ & $0$ & $0$ & $-1+(\frac{2}{F})Q(\lambda)$ \\ 
$\kappa_{\nu,n}$  & $0$ & $0$ & $0$ & $-1-(\frac{2}{F})Q(\lambda)$ \\ \hline
$\omega_u$  & $\delta$ &  $\delta$ & $(-1)^{m+1}$ & $Q(\lambda_u)$  \\
$\omega_u\eta_1$  & $\frac{1-\sqrt{\delta q}}{2}$  & $\frac{1+\sqrt{\delta q}}{2}$  & $1$  & $\frac{(-1+\sqrt{\delta q})Q(\lambda_u)}{2}$  \\
$\omega_u\eta_2$ & $\frac{1+\sqrt{\delta q}}{2}$ & $\frac{1-\sqrt{\delta q}}{2}$  & $1$  & $\frac{(-1-\sqrt{\delta q})Q(\lambda_u)}{2}$  \\
$\omega_u\xi_1$ & $\frac{1+\sqrt{\delta q}}{2}$ & $\frac{1-\sqrt{\delta q}}{2}$ & $0$  & $\frac{(1+\sqrt{\delta q})Q(\lambda_u)}{2}$\\
$\omega_u\xi_2$ & $\frac{1-\sqrt{\delta q}}{2}$ & $\frac{1+\sqrt{\delta q}}{2}$ & $0$  & $\frac{(1-\sqrt{\delta q})Q(\lambda_u)}{2}$\\ 
$\omega_u\theta_j$  & $(-1)^{j+1}\delta$ & $(-1)^{j+1}\delta$ & $(-1)^m(\sigma^{jm}+\sigma^{-jm})$ & $-Q(\lambda_u)$ \\
$\omega_u\psi$  & $0$ & $0$ & $(-1)^m$ & $0$ \\
$\omega_u\chi_i$ & $(-1)^{i}\delta$ & $(-1)^{i}\delta$ & $0$ & $Q(\lambda_u)$ \\ \hline
\end{tabular}

\begin{tabular}{|c||c|c|c|} \hline
{} & $\mathscr{I}(0)$  & $\mathscr{L}_m$ & $\mathscr{M}_m$ \\\hline\hline 
$1_G$ & 1 & 1 & 1\\
$\eta_1$ & $\frac{(-1-\sqrt{\delta q})}{2}$ & $\frac{(-1+\sqrt{\delta q})}{2}$ & $\frac{(-1-\sqrt{\delta q})}{2}$\\
$\eta_2$ & $\frac{(-1+\sqrt{\delta q})}{2}$ & $\frac{(-1-\sqrt{\delta q})}{2}$ & $\frac{(-1+\sqrt{\delta q})}{2}$\\
$\xi_1$ & $\frac{(1-\sqrt{\delta q})}{2}$ & $\frac{(1+\sqrt{\delta q})}{2}$ & $\frac{(1-\sqrt{\delta q})}{2}$\\
$\xi_2$ & $\frac{(1+\sqrt{\delta q})}{2}$ & $\frac{(1-\sqrt{\delta q})}{2}$ & $\frac{(1+\sqrt{\delta q})}{2}$\\
$\theta_j$ & $-1$ & $-1$ & $-1$\\
$\psi$ & $0$ & $0$ & $0$ \\
$\chi_i$ & $1$  & $1$ & $1$ \\ \hline
$\kappa_0$ & $q-1$  & $-1$ & $-1$\\ 
$\kappa_{1,n}$ & $-1-(\frac{2}{F})Q(\lambda)$ & $\sum_{t \in F^\times}\lambda(-\frac{t^3+\nu^{n+m}}{t})$ &   $\sum_{t\in F^\times}\lambda(-\frac{\nu t^3+\nu^{n+m}}{t})$ \\
$\kappa_{\nu,n}$ & $-1+(\frac{2}{F})Q(\lambda)$ & $\sum_{t \in F^\times}\lambda(-\frac{\nu t^3+\nu^{n+m}}{t})$ &   $\sum_{t\in F^\times}\lambda(-\frac{\nu^2 t^3+\nu^{n+m}}{t})$ \\\hline
$\omega_u$ & $-Q(\lambda_u)$ & $0$ & $0$ \\
$\omega_u\eta_1$ & $\frac{(1+\sqrt{\delta q})Q(\lambda_u)}{2}$ & $0$ & $0$ \\
$\omega_u\eta_2$ & $\frac{(1-\sqrt{\delta q})Q(\lambda_u)}{2}$ & $0$ & $0$ \\
$\omega_u\xi_1$ & $\frac{(-1+\sqrt{\delta q})Q(\lambda_u)}{2}$ & $0$ & $0$ \\
$\omega_u\xi_2$ & $\frac{(-1-\sqrt{\delta q})Q(\lambda_u)}{2}$ & $0$ & $0$ \\ 
$\omega_u\theta_j$ & $Q(\lambda_u)$ & $0$ & $0$ \\
$\omega_u\psi$  & $0$ & $0$ & $0$ \\
$\omega_u\chi_i$ & $-Q(\lambda_u)$  & $0$ & $0$ \\ \hline
\end{tabular}
\end{center}
\end{small}

\noindent \textbf{Notations.}  \quad $1\leq i, k \leq \frac{q-3}{2}$, $1\leq j, m,n \leq \frac{q-1}{2}$.
$\delta=(-1)^{\frac{q-1}{2}}$, $\rho=e^{\frac{2\pi \imath}{q-1}}$, $\sigma=e^{\frac{2\pi \imath}{q+1}}$. $F=GF(q)$, 
$F^\times=\langle\nu\rangle$, $u \in F^\times$. $\lambda$ is a (fixed) non-trivial character of $\mathbf{Z}(G)$.
$\lambda_u$ is the linear character of $\mathbf{Z}(G)$ defined by $\lambda_u(z)=\lambda(uz)$ for all $z \in \mathbf{Z}(G)$. 
$$Q(\lambda)=\sum_{t\in F}\lambda(-t^2/2),\quad \quad Q(\lambda_u)=\Big(\dfrac{u}{F}\Big)Q(\lambda) .$$

For all $\chi \in Irr(G)$, we have (but this is omitted from the Table)
$$\chi(\mathscr{C}(z))=\frac{\chi(\mathscr{A}(z))}{\chi(\mathscr{A}(0))}\chi(\mathscr{C}(0)),$$
and likewise for the other conjugacy classes.

\end{document}